
\documentclass[12pt,a4paper]{article}

\usepackage[margin=20mm]{geometry}
\usepackage[T1]{fontenc}
\usepackage[latin2]{inputenc}
\usepackage{ae}
\usepackage{graphicx}
\usepackage[american]{babel}
\usepackage{amsfonts}

\usepackage{amsthm}
\usepackage{amsmath}
\usepackage{amssymb}
\usepackage{wasysym}
\usepackage{paralist}
\usepackage{multirow}
\usepackage{cite}
\usepackage{pifont}
\usepackage{imakeidx}
\makeindex
\bibliographystyle{plain}

\newtheorem{Theorem}{Theorem}
\newtheorem{Lemma}[Theorem]{Lemma}

\newtheorem{Question}[Theorem]{Question}

\renewcommand{\d}{\ {\rm d}}
\newcommand{\E}{{\sf E}}
\newcommand{\G}{\mathbb G}
\newcommand{\I}{\mu}
\newcommand{\N}{\mathbb N}
\renewcommand{\P}{{\sf P}}
\renewcommand{\:}{\colon}

\usepackage{color}
\definecolor{red}{rgb}{0.8,0,0}
\definecolor{green}{rgb}{0,0.6,0}

\begin{document}

\title{On the graph limit question of Vera T. Sós}
\author{Endre Cs\'oka\footnote{Institute of Mathematics and DIMAP, University of Warwick, Coventry, United Kingdom.
Supported by ERC grants 306493 and 648017. 
}}
\date{}

\maketitle

\begin{abstract}
In the dense graph limit theory, the topology of the set of graphs is defined by the distribution of the subgraphs spanned by finite number of random vertices. Vera T. Sós proposed a question that if we consider only the number of edges in the spanned subgraphs, then whether it provides an equivalent definition. We show that the answer is positive on quasirandom graphs, and we prove a generalization of the statement.
\end{abstract}

\section{Introduction}

Graphon is a symmetric measurable function $W: [0, 1]^2 \rightarrow [0, 1]$, introduced in the context of graph theory by Lovász and Szegedy \cite{LoSz, lovasz2012large}. A sampling $\G(n, W)$ is a distribution of $n$-vertex graphs constructed as follows. We choose $x_1, x_2, ..., x_n \in [0,1]^n$ uniformly at random, and we connect the $i$th and $j$th vertices with probability $W(x_i, x_j)$. Therefore, the probability of each $n$-vertex graph $F = ([n], E(F))$ in the sampling $\G(n, W)$ is
\begin{equation*}
\G(n, W)(F) = \int\limits_{[0, 1]^n} \prod_{\{i, j\} \in E(F)} W(x_i, x_j) \prod_{\{i, j\} \in \overline{E(F)}} \big( 1 - W(x_i, x_j) \big) \ \d x.
\end{equation*}

A basic theorem about graphons tells that the sequence of samplings $\G(n, W)$ determine the graphon $W$ up to weak isomorphism \cite{lovasz2012large}. Vera T. Sós asked the question whether the distribution of the number of edges instead of the spanned subgraph already has this property \cite{Sos}. Formally, let
\begin{equation*}
\N(n, W)(k) = \sum_{F\: |E(F)| = k} \G(n, W)(F).
\end{equation*}

\begin{Question}
Does the sequence $\big(\N(n, W)\big)_{n \in \N}$ determine the sequence $\big(\G(n, W)\big)_{n \in \N}$?
\\ In other words, does the sequence $\N(n, W)$ determine the graphon $W$ up to weak isomorphism?
\end{Question}

Motivated by this question, Svante Janson (personal communication) asked the following question.

\begin{Question}
Does $\N(4, W) = \N(4, \frac{1}{2})$ imply $W \equiv \frac{1}{2}$, i.e. $W$ is constant $\frac{1}{2}$ almost everywhere?
\end{Question}

First, Jacob Fox proved the positive answer for all $p \in [0, 1]$ not only for $p = \frac{1}{2}$ in an unpublished result. At the same time, Noga Alon also proved for $p = \frac{1}{2}$. Later but independently, Jakub Sliacan proved it by flag algebra. In our paper we show a simple combinatorial proof of a stronger version of the statement. Namely, we show that 4 can be exchanged to any larger number, moreover, the sampling could be exchanged to any graph-sampling which includes a 4-cycle $C_4$.

Let subgraph mean edge-subgraph, namely we keep the vertex set, but we take a subset of its edges.
For a finite graph $S = ([n], E(S))$, we define the \emph{sampling} of a graphon $W$ by a graph $S$ to be the following distribution $\G(S, W)$ of the subgraphs of $S$.
\begin{equation*}
\G(S, W)(F) = \int\limits_{[0, 1]^n} \prod_{\{i, j\} \in E(F)} W(x_i, x_j) \prod_{\{i, j\} \in E(S) \setminus E(F)} \big( 1 - W(x_i, x_j) \big) \ \d x.
\end{equation*}
Analogously,
\begin{equation*}
\N(S, W)(k) = \sum_{F\: |E(F)| = k} \G(S, W)(F).
\end{equation*}

Notice that $\G(n, W) = \G(K_n, W)$ and $\N(n, W) = \N(K_n, W)$ (where $K_n$ is the complete graph on $n$ vertices).

\begin{Theorem} \label{main}
Let $G$ be a graph and $W$ a graphon. Assume that $G$ contains a $C_4$ and $\N(G, W) = \N(G, p)$. Then $W \equiv p$.
\end{Theorem}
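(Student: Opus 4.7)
My plan is to translate the hypothesis $\N(G, W) = \N(G, p)$ into a chain of subgraph-density identities via the probability generating function of the edge count, then peel off information about $W$ order by order, finishing with a Cauchy--Schwarz-type inequality at $C_4$. Write $m = |E(G)|$, $n = |V(G)|$, and for every edge-subset $F \subseteq E(G)$ set
\[
t(F, W) = \int\limits_{[0, 1]^n} \prod_{\{i, j\} \in F} W(x_i, x_j) \d x
\]
(vertices of $G$ not incident to $F$ integrate to $1$). Since
\[
\int\limits_{[0, 1]^n} \prod_{\{i, j\} \in E(G)} \bigl(1 + (z - 1) W(x_i, x_j)\bigr) \d x = \sum_{k \geq 0} (z - 1)^k \sum_{\substack{F \subseteq E(G) \\ |F| = k}} t(F, W)
\]
is the probability generating function of the edge count in $\G(G, W)$, and under $\G(G, p)$ it equals $(1 + (z - 1)p)^m$, comparing coefficients of $(z - 1)^k$ turns the hypothesis into
\[
\sum_{\substack{F \subseteq E(G) \\ |F| = k}} t(F, W) = \binom{m}{k} p^k \qquad (k = 0, 1, 2, \ldots).
\]

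The $k = 1$ case gives $\int W = p$. For $k = 2$, splitting the pairs of edges of $G$ into $D$ disjoint pairs (each contributing $p^2$, once $\int W = p$) and $P$ adjacent pairs, i.e.\ paths of length $2$ (each contributing $\int d(y)^2 \d y$ with $d(y) = \int W(x, y) \d x$), yields $P \cdot (\int d^2 - p^2) = 0$. Since $G$ contains a $C_4$, $P \geq 1$, so Cauchy--Schwarz forces $d \equiv p$; that is, $W$ is $p$-regular.

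Now set $V := W - p$, so $\int V(x, y) \d y = 0$ for a.e.\ $x$. Substituting $W(x_i, x_j) = p + V(x_i, x_j)$ into $t(F, W)$ and expanding, any term indexed by a subset $F' \subseteq F$ whose graph has a vertex of degree $\leq 1$ vanishes (integrating over that vertex variable kills the factor). Thus only $F'$ with minimum degree at least $2$ contribute, and among graphs with at most $4$ edges these are exactly $C_3$ and $C_4$. Writing $c_3$ and $c_4$ for the numbers of triangles and $4$-cycles in $G$, the $k = 3$ identity collapses to $c_3 \cdot t(C_3, V) = 0$, while the $k = 4$ identity becomes
\[
p \cdot c_3 \cdot (m - 3) \cdot t(C_3, V) + c_4 \cdot t(C_4, V) = 0.
\]
Combining the two yields $c_4 \cdot t(C_4, V) = 0$, so $t(C_4, V) = 0$ because $c_4 \geq 1$ by hypothesis. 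Finally, the standard rearrangement $t(C_4, V) = \|V \ast V\|_{L^2}^2$ with $(V \ast V)(x, z) = \int V(x, y) V(y, z) \d y$ implies $V \ast V \equiv 0$; specializing to $z = x$ gives $\int V(x, y)^2 \d y = 0$ for a.e.\ $x$, whence $V \equiv 0$ and $W \equiv p$ a.e.

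The main obstacle is the combinatorial bookkeeping in the third paragraph: verifying that $C_4$ is the only simple graph with $4$ edges and minimum degree $\geq 2$, so that it is the unique new contributor at order $k = 4$. Once this classification is in hand, the argument is a clean descent through the identities above for $k = 1, 2, 3, 4$.
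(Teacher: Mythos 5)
Your proposal is correct, and while it ends at the same place as the paper (reduce everything to $t(C_4,W)=p^4$ and invoke the Chung--Graham--Wilson quasirandomness criterion), the middle of your argument is genuinely different. The paper derives the moment identities $\sum_{|F|=k} t(F,W) = \binom{m}{k}p^k$ via a probabilistic exchange argument (sample $W$ first, then pick a uniform $k$-edge subgraph, and compare with $m$ disjoint edges), whereas you get the same identities by comparing coefficients of the probability generating function of the edge count; the two are equivalent, and yours is arguably more transparent. More substantively, after establishing $p$-regularity the paper proceeds lemma by lemma, computing $t(S_2,W)$, $t(P_3,W)$, $t(S_3,W)$, and (when present) $t(K_3,W)$ individually via a pendant-edge extension lemma, and only then isolates $C_4$ at $k=4$. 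Your substitution $W = p + V$ with $\int V(x,\cdot)=0$ does all of that bookkeeping at once: every subset whose spanned graph has a vertex of degree one dies upon integration, so only $C_3$ and $C_4$ survive among subgraphs with at most four edges, and the $k=3$ and $k=4$ identities collapse to exactly the two cycle terms. This is a cleaner and slightly more general-looking descent, and it makes visible why $C_4$ is the critical configuration. (Your classification is right: minimum degree $\ge 2$ forces $|V(F')|\le |F'|$, and no $4$-edge simple graph on at most $4$ vertices other than $C_4$ achieves it; also a $4$-edge set cannot contain two triangles, so the count $c_3(m-3)$ is exact.)

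Two small points to tighten. First, "degree $\le 1$" should read "degree exactly $1$" (or restrict attention to the graph spanned by the edges of $F'$): an isolated vertex integrates to $1$ and kills nothing. Second, the final step "specializing to $z=x$" is not legitimate as written, since the diagonal is a null set and $V*V=0$ almost everywhere says nothing about its values there. The standard repair: $t(C_4,V)=\|V*V\|_{L^2}^2 = 0$ means the self-adjoint Hilbert--Schmidt operator $T_V$ satisfies $T_V^2=0$, hence $T_V=0$, hence $V=0$ almost everywhere (equivalently, $t(C_4,V)=\sum_i \lambda_i^4$ over the eigenvalues of $T_V$). This is precisely the content of the Chung--Graham--Wilson lemma that the paper cites as a black box, so it is a fixable slip rather than a gap in the argument.
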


\section{Proof of Theorem~\ref{main}}

Let $S_k$, $P_k$ and $C_k$ denote the star, the path and the cycle containing $k$ edges, respectively. Let $|E(G)| = m$, and we define homomorphism density of $F$ to $W$ as
\begin{equation} \label{tdef}
t(F, W) = \int\limits_{[0, 1]^{V(F)}} \prod_{\{i, j\} \in E(F)} W(x_i, x_j) \ \d x.
\end{equation}
We will use the simple fact that for vertex-disjoint union of graphs $F_i$,
\begin{equation} \label{prod}
t(F_1 \sqcup F_2 \sqcup ... \sqcup F_k, W) = \prod_{i = 1}^k t(F_i, W).
\end{equation}

\begin{Lemma} \label{tGk}
For an arbitrary positive integer $k\le m$, let $G_k$ denote the uniform random subgraph of $G$ with $k$ edges (namely, all ${m \choose k}$ subgraphs have equal probability). Then
\begin{equation} \label{tGkeq}
\E\big( t(G_k, W) \big) = p^k.
\end{equation}
\end{Lemma}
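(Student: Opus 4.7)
The plan is to encode the lemma into a one-variable polynomial identity. Define
\begin{equation*}
M_W(z) = \sum_{k=0}^{m} \N(G, W)(k)\, z^k,
\end{equation*}
so that the hypothesis $\N(G, W) = \N(G, p)$ is exactly the polynomial identity $M_W(z) = M_p(z)$. First I would rewrite $M_W(z)$ by swapping the finite sum over $F \subseteq G$ with the integral in the definition of $\G(G, W)$. The weight $z^{|E(F)|} \prod_{e \in E(F)} W(e) \prod_{e \in E(G) \setminus E(F)} (1 - W(e))$ factorises edge by edge, so
\begin{equation*}
M_W(z) = \int_{[0,1]^n} \prod_{e \in E(G)} \bigl(1 + (z-1)\, W(e)\bigr)\, \d x.
\end{equation*}
Expanding this product over subsets $H \subseteq E(G)$ and integrating (isolated vertices of $H$ contribute $1$, by (\ref{prod})), I obtain
\begin{equation*}
M_W(z) = \sum_{H \subseteq G} (z-1)^{|E(H)|}\, t(H, W).
\end{equation*}

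For the constant graphon, $t(H, p) = p^{|E(H)|}$ and the number of $\ell$-edge subgraphs of $G$ is $\binom{m}{\ell}$, so $M_p(z) = (1 + (z-1)p)^m$. Setting $u = z - 1$, the identity $M_W = M_p$ reads
\begin{equation*}
\sum_{H \subseteq G} u^{|E(H)|}\, t(H, W) \ =\ (1 + p u)^m \ =\ \sum_{k=0}^{m} \binom{m}{k} p^k u^k,
\end{equation*}
and comparing coefficients of $u^k$ yields $\sum_{H \subseteq G,\, |E(H)| = k} t(H, W) = \binom{m}{k} p^k$. Dividing by $\binom{m}{k}$ gives the desired $\E\bigl(t(G_k, W)\bigr) = p^k$.

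I do not see a serious obstacle: all sums are finite, so interchanging sum and integral is free, and the conclusion is coefficient extraction from a one-variable polynomial identity. The only mildly non-obvious move is to expand in powers of $(z-1)$ rather than $z$; this is natural because $1 - W(e)$ already appears in $\G(G, W)$, and it is precisely this reparametrisation that turns $M_p(z)$ into a clean monomial expansion whose coefficients read off $p^k$.
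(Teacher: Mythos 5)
Your proof is correct, but it takes a genuinely different route from the paper's. The paper argues probabilistically: it fixes the sample point $x$, introduces independent edge indicators $X(e)$ with success probabilities $W(x_i,x_j)$, and observes that averaging $\prod_{e\in E(G_k)}W(x_i,x_j)$ over the random $k$-edge subgraph equals $\E\big(\binom{\I(G)}{k}\big)/\binom{m}{k}$, where $\I(G)\sim\N(G,W)$; it then evaluates this binomial moment by running the identical computation on $m$ disjoint edges $m\times P_1$ and using $\N(G,W)=B(m,p)=\N(m\times P_1,W)$ to conclude $\E\big(t(G_k,W)\big)=t(k\times P_1,W)=t(P_1,W)^k=p^k$. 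You instead encode $\N(G,W)$ in its probability generating function, prove the algebraic identity $M_W(1+u)=\sum_{H\subseteq G}u^{|E(H)|}\,t(H,W)$, and extract coefficients against $(1+pu)^m$. These are two faces of the same identity --- the coefficient of $u^k$ in $M_W(1+u)$ is precisely the binomial moment $\E\big(\binom{\I(G)}{k}\big)$ that the paper computes --- but your version is purely algebraic and self-contained: it avoids the comparison with $m\times P_1$, which in the paper implicitly relies on $t(P_1,W)=p$ (a fact the paper only records afterwards as the $k=1$ case of the lemma), and it isolates the clean intermediate statement $\sum_{|E(H)|=k}t(H,W)=\binom{m}{k}p^k$. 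What the paper's phrasing buys in exchange is the probabilistic intuition of ``sample first, choose the subgraph afterwards,'' at the cost of a slightly more delicate bookkeeping of which distribution equals which.
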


\begin{proof}
On the left hand side, there are two randomnesses: the choice of the subgraph $G_k$ and the sampling. But if we do the sampling first and we choose the subgraph after, then we get the right hand side. Formally,
\begin{equation} \label{expint}
\E\big( t(G_k, W) \big) \mathop{=}^{\eqref{tdef}} \E\Bigg( \int\limits_{[0, 1]^{V(G)}} \prod_{\{i, j\} \in E(G_k)} W(x_i, x_j)\ \d x \Bigg) = \int\limits_{[0, 1]^{V(G)}} \E\bigg( \prod_{\{i, j\} \in E(G_k)} W(x_i, x_j) \bigg) \d x.
\end{equation}
For a fixed $x \in [0, 1]^{V(G)}$, let $X: E(G) \rightarrow \{0, 1\} = \{false, true\}$ be independent events with probabilities $\P\big( X(\{i, j\}) \big) = W(x_i, x_j)$ for all $\{i, j\} \in E(G)$.
For a graph $F$, let us denote the number of occurring events by $\I(F) = \sum\limits_{(a, b) \in E(F)} X(a, b)$.
\begin{equation*}
\E_{G_k}\Big( \prod_{\{i, j\} \in E(G_k)} W(x_i, x_j) \Big) = \P_{G_k, X}\Big( \bigwedge_{\{i, j\} \in E(G_k)} X\big(\{i, j\}\big) \Big)
\end{equation*}
\begin{equation} \label{variables}
= \E_X \P_{G_k}\Big( \bigwedge_{\{i, j\} \in E(G_k)} X\big(\{i, j\}\big) \Big) = \E_X\bigg( \frac{{\I(G) \choose k}}{{m \choose k}} \bigg)
\end{equation}
Therefore,
\begin{equation} \label{eGk}
\E\big( t(G_k, W) \big) \mathop{=}^{\eqref{expint}\eqref{variables}} \int\limits_{[0, 1]^{V(G)}} \E_X\bigg( \frac{{\I(G) \choose k}}{{m \choose k}} \bigg) \d x = \E_{x, X} \bigg( \frac{{\I(G) \choose k}}{{m \choose k}} \bigg).
\end{equation}
The distibution of $\I(G)$ is $\N(G, W)$ by definition.
We can make the same calculation with $m$ independent edges $m \times P_1$ instead of $G$, which provides that
\begin{equation} \label{ekP1}
t(k \times P_1, W) = \E\Big( t\big((m \times P_1)_k, W\big) \Big) = \E_{x, X} \bigg( \frac{{\I(m \times P_1) \choose k}}{{m \choose k}} \bigg).
\end{equation}

$\N(G, W) = \N(G, p) =$ binomial distribution $B(m, p) = \N(m \times P_1, W)$, hence, $\I(G) = \I(m \times P_1)$. Therefore,
\begin{equation*}
\E\big( t(G_k, W) \big) \mathop{=}^{\eqref{eGk}} \E_{x, X} \bigg( \frac{{\I(G) \choose k}}{{m \choose k}} \bigg) =  \E_{x, X} \bigg( \frac{{\I(m \times P_1) \choose k}}{{m \choose k}} \bigg) \mathop{=}^{\eqref{ekP1}} t(k \times P_1, W) = t(P_1, W)^k = p^k. \qedhere
\end{equation*}
\end{proof}

This lemma for $k = 1$ immediately gives that
\begin{equation} \label{P1p}
E\big(t(P_1, W)\big) = E\big(t(G_1, W)\big) = p.
\end{equation}

\begin{Lemma}
\begin{equation} \label{tS2}
t(S_2, W) = p^2
\end{equation}
\end{Lemma}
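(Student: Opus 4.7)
The plan is to apply Lemma~\ref{tGk} with $k=2$ and expand the resulting expectation by conditioning on the isomorphism type of the random two-edge subgraph $G_2$. Any pair of edges of $G$ either shares a vertex, producing (after discarding isolated vertices, which $t$-densities ignore) an $S_2$, or is vertex-disjoint, producing $2 \times P_1$. Letting $\alpha$ denote the probability of the former case, Lemma~\ref{tGk} gives
\[
p^2 \;=\; \E\big( t(G_2, W) \big) \;=\; \alpha\, t(S_2, W) + (1-\alpha)\, t(2 \times P_1, W).
\]

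Next, observe that $t(P_1, W)$ is a deterministic quantity rather than a random variable, so \eqref{P1p} in fact asserts $t(P_1, W) = p$. Combined with the multiplicativity identity \eqref{prod}, this gives $t(2 \times P_1, W) = t(P_1, W)^2 = p^2$. Substituting back, the identity above collapses to $\alpha\, t(S_2, W) = \alpha\, p^2$, and the lemma reduces to checking that $\alpha > 0$.

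This is precisely where the hypothesis that $G$ contains a $C_4$ enters: the four edges of such a $C_4$ already furnish pairs of adjacent edges, so the probability that two uniformly chosen edges of $G$ share a vertex is strictly positive. Dividing by $\alpha$ then yields $t(S_2, W) = p^2$. There is no genuine obstacle in this argument; the only subtlety worth flagging is that the $C_4$ assumption (or any weaker assumption guaranteeing two adjacent edges in $G$) is used solely to rule out the degenerate case in which $G$ is a matching, where $\alpha = 0$ and the identity would carry no information about $t(S_2, W)$.
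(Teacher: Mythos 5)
Your proposal is correct and follows essentially the same route as the paper: apply Lemma~\ref{tGk} with $k=2$, split $\E(t(G_2,W))$ according to whether the two sampled edges are adjacent, use \eqref{prod} and \eqref{P1p} to evaluate the disjoint case as $p^2$, and divide by the positive adjacency probability. Your explicit justification that this probability is positive (via the $C_4$, or indeed any two adjacent edges of $G$) is a point the paper glosses over with ``for some $\lambda>0$,'' so your write-up is, if anything, slightly more careful.
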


\begin{proof}
We apply Lemma~\ref{tGk} with $k = 2$. The support of $G_2$ consists of two (isomorphism classes of) graphs: two independent edges $2 \times P_1$ and $S_2$. We 
Therefore, for some $\lambda > 0$,
\begin{equation*}
p^2 = \E \big( t(G_2, W) \big) = \lambda \cdot t(S_2, W) + (1 - \lambda) \cdot t(2 \times P_1, W) \mathop{=}^{\eqref{prod}} \lambda \cdot t(S_2, W) + (1 - \lambda) \cdot t(P_1, W)^2
\end{equation*}
\begin{equation*}
\mathop{=}^{\eqref{P1p}} \lambda \cdot t(S_2, W) + (1 - \lambda) \cdot p^2 = p^2 + \lambda \cdot \big( t(S_2, W) - p^2 \big),
\end{equation*}
which implies, $t(S_2, W) = p^2$.
\end{proof}

The \emph{degree} of a vertex $x \in [0, 1]$ of a graphon $W$ is defined as follows.
\begin{equation*}
\deg(x) = \int_0^1 W(x, y) \d y
\end{equation*}

Note that $W$ is measurable, therefore, $\deg(x)$ exists for almost all vertices $x \in [0, 1]$.

\begin{Lemma}
Almost all degrees of $W$ are $p$.
\end{Lemma}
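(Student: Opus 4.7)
The plan is to interpret $t(S_2, W)$ and $t(P_1, W)$ as the second and first moments, respectively, of the degree function $\deg \colon [0,1] \to [0,1]$, and then conclude via a zero-variance argument.

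First I would rewrite the two homomorphism densities already established. By Fubini and the definition of $\deg$,
\begin{equation*}
t(S_2, W) = \int_{[0,1]^3} W(x, y) W(x, z)\ \d x\, \d y\, \d z = \int_0^1 \deg(x)^2 \ \d x,
\end{equation*}
and analogously
\begin{equation*}
t(P_1, W) = \int_{[0,1]^2} W(x, y)\ \d x\, \d y = \int_0^1 \deg(x)\ \d x.
\end{equation*}
By \eqref{tS2} the first integral equals $p^2$, and by \eqref{P1p} the second equals $p$.

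Now I would compute the $L^2$ variance of $\deg$ around $p$:
\begin{equation*}
\int_0^1 \big(\deg(x) - p\big)^2 \ \d x = \int_0^1 \deg(x)^2 \ \d x - 2p \int_0^1 \deg(x)\ \d x + p^2 = p^2 - 2p^2 + p^2 = 0.
\end{equation*}
Since the integrand is nonnegative, it must vanish almost everywhere, and hence $\deg(x) = p$ for almost all $x \in [0,1]$.

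There is essentially no obstacle: once the two integrals representing $t(S_2,W)$ and $t(P_1,W)$ are identified as the second and first moments of the degree function, equality in Cauchy--Schwarz (equivalently, vanishing variance) is immediate. The only technical point to mention is measurability of $\deg$, which follows from Fubini applied to the bounded measurable function $W$, so that the integrals above are well defined.
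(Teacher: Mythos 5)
Your proof is correct and is essentially the paper's own argument: the paper likewise identifies $t(S_2,W)$ and $t(P_1,W)$ as the second and first moments of $\deg$ and concludes from ${\sf Var}(\deg(W)) = t(S_2,W) - p^2 = 0$ that $\deg \equiv p$ almost everywhere. You have merely written out the Fubini step and the variance expansion that the paper leaves implicit.
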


\begin{proof}
\begin{equation*}
{\sf Var}\big(\deg(W)\big) = \E\big(\deg(W)^2\big) - \E\big(\deg(W)\big)^2 = t(S_2, W) - p^2 = 0. \qedhere
\end{equation*}
\end{proof}

\begin{Lemma} \label{extend}
Let $F$ be an arbitrary graph and $F'$ be its extension by one new vertex $v$ and a new edge $(v, w)$ connecting $v$ to an arbitrary old vertex. Then
\begin{equation} \label{extendeq}
t(F', W) = p \cdot t(F, W).
\end{equation}
\end{Lemma}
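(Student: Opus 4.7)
The plan is to exploit the fact that in $F'$ the new vertex $v$ is a leaf attached only to $w$, so the variable $x_v$ appears in the integrand of $t(F', W)$ only through the single factor $W(x_w, x_v)$. The strategy is therefore to peel off the innermost integration $\int_0^1 W(x_w, x_v) \d x_v$ using Fubini, recognize it as $\deg(x_w)$, and then invoke the preceding lemma which says $\deg(x_w) = p$ for almost every $x_w$.

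Concretely, first I would write
\begin{equation*}
t(F', W) = \int_{[0, 1]^{V(F)}} \bigg( \int_0^1 W(x_w, x_v) \d x_v \bigg) \prod_{\{i, j\} \in E(F)} W(x_i, x_j) \ \d x,
\end{equation*}
where Fubini is applicable because $0 \le W \le 1$ keeps the integrand bounded. The inner parenthesized integral is precisely $\deg(x_w)$. Replacing it by its almost-everywhere value $p$ and pulling the constant outside yields $p \cdot t(F, W)$, as required.

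There is no substantive obstacle here: the work is essentially bookkeeping. The only point worth a moment's thought is the almost-everywhere substitution, but it is harmless because the remaining factor $\prod_{\{i, j\} \in E(F)} W(x_i, x_j)$ is bounded by $1$, so the null set on which $\deg(x_w) \ne p$ makes no contribution to the integral. The fact that $v$ is attached by only one new edge is crucial — it is what makes the integration over $x_v$ decouple from the rest of the integrand.
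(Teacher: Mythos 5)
Your proof is correct and follows essentially the same route as the paper: separate the integration over $x_v$, identify the inner integral as $\deg(x_w)$, and substitute its almost-everywhere value $p$ using the preceding lemma. Your explicit remarks on Fubini and on why the null set is harmless are fine additions that the paper leaves implicit.
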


\begin{proof}
In short, whatever we sample by $F$, the probability that $(v, w)$ maps to an edge in $W$ is $p$, because all degrees are $p$. Formally,
\begin{equation*}
t(F', W) \mathop{=}^{\eqref{tdef}} \int\limits_{[0, 1]^{V(F')}} \prod_{\{i, j\} \in E(F')} W(x_i, x_j) \ \d x
= \int\limits_{[0, 1]^{V(F)}} \prod_{\{i, j\} \in E(F)} W(x_i, x_j) \int\limits_{[0, 1]} W(x_v, x_w) \d x_v \d x_{V(F)}
\end{equation*}
\begin{equation*}
= \int\limits_{[0, 1]^{V(F)}} \prod_{\{i, j\} \in E(F)} W(x_i, x_j) \cdot \deg(w) \d x_{V(F)} = p \cdot \int\limits_{[0, 1]^{V(F)}} \prod_{\{i, j\} \in E(F)} W(x_i, x_j) \d x \mathop{=}^{\eqref{tdef}} p \cdot t(F,W). \qedhere
\end{equation*}
\end{proof}

\begin{Lemma}
$t(P_3, W) = t(S_3, W) = p^3.$
\end{Lemma}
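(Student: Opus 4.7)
The plan is to read off both identities as immediate consequences of Lemma~\ref{extend}, taking the already-analyzed graph $F = S_2 = P_2$ (for which the previous lemma gives $t(S_2, W) = p^2$) as the starting point in each case.

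First I would dispose of $S_3$. The star $S_3$ is obtained from $S_2$ by attaching a single new vertex $v$ to the center vertex $w$ of $S_2$ via one new edge $(v,w)$. This is exactly the one-vertex-one-edge extension hypothesized in Lemma~\ref{extend}, so the lemma yields $t(S_3, W) = p \cdot t(S_2, W) = p \cdot p^2 = p^3$.

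Next I would handle $P_3$ the same way. The path $P_3$ is obtained from $P_2 = S_2$ by attaching a single new vertex $v$ to one of the two endpoints $w$ of $P_2$ via a new edge. Lemma~\ref{extend} applies verbatim and gives $t(P_3, W) = p \cdot t(P_2, W) = p \cdot p^2 = p^3$.

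There is no genuine obstacle in this step: both equalities are one-line corollaries of Lemma~\ref{extend}, because both $P_3$ and $S_3$ are built from $S_2 = P_2$ by a single pendant-edge extension. All of the real content has already been absorbed into the preceding results establishing that almost every degree of $W$ equals $p$, which is what powers Lemma~\ref{extend} and hence this lemma.
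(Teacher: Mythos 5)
Your proposal is correct and matches the paper's proof exactly: the paper likewise applies Lemma~\ref{extend} to $F = P_2 = S_2$ with $F' = P_3$ or $F' = S_3$, obtaining $t(F', W) = p \cdot t(S_2, W) = p^3$. Nothing further is needed.
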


\begin{proof}
We apply Lemma~\ref{extend} for $F = P_2 = S_2$ and $F' = P_3$ or $F' = S_3$, namely,
\begin{equation*}
t(F', W) \mathop{=}^{\eqref{extendeq}} p \cdot t(S_2, W) \mathop{=}^{\eqref{tS2}} p \cdot p^2 = p^3. \qedhere
\end{equation*}
\end{proof}

\begin{Lemma}
If $G$ contains a triangle $K_3$, then $t(K_3, W) = p^3.$
\end{Lemma}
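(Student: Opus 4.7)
The plan is to mimic the argument used for $t(S_2,W)=p^2$, now applying Lemma~\ref{tGk} with $k=3$. The possible isomorphism classes of subgraphs of $G$ with exactly $3$ edges (ignoring isolated vertices) are $3\times P_1$, $P_1\sqcup S_2$, $P_3$, $S_3$, and $K_3$. Writing $\lambda_H$ for the probability that $G_3$ has isomorphism class $H$, Lemma~\ref{tGk} yields
\begin{equation*}
p^3 = \E\big(t(G_3,W)\big) = \sum_H \lambda_H\, t(H,W).
\end{equation*}

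Next I would check that every class $H \neq K_3$ already satisfies $t(H,W)=p^3$. For $3\times P_1$ and $P_1\sqcup S_2$ this follows from \eqref{prod} together with \eqref{P1p} and \eqref{tS2}. For $P_3$ and $S_3$ it is exactly the statement of the previous lemma (both being one-edge extensions of $S_2$ via Lemma~\ref{extend}). Substituting these identities into the displayed equation collapses the sum to
\begin{equation*}
p^3 = \lambda_{K_3}\, t(K_3,W) + (1-\lambda_{K_3})\, p^3,
\end{equation*}
so $\lambda_{K_3}\cdot\bigl(t(K_3,W)-p^3\bigr)=0$.

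Finally I would verify that $\lambda_{K_3}>0$. Since $G$ is assumed to contain a $K_3$, fixing the three edges of such a triangle gives a specific $3$-edge subset occurring with probability $1/\binom{m}{3}>0$ when we pick the uniform random $G_3$, hence $\lambda_{K_3}\geq 1/\binom{m}{3}>0$. Dividing by $\lambda_{K_3}$ gives $t(K_3,W)=p^3$, as desired.

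There is essentially no hard step here; the whole argument is a direct transcription of the $S_2$-argument, and the only thing that could go wrong is the positivity of $\lambda_{K_3}$, which is exactly guaranteed by the hypothesis that $G$ contains a $C_3$. The pattern suggests a more general principle: whenever all proper subgraphs of a fixed graph $H$ already have density $p^{|E(H)|}$ and $G$ contains a copy of $H$, then $t(H,W)=p^{|E(H)|}$ — this is the template that will be reused for $C_4$ in the next step of the proof.
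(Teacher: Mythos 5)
Your proof is correct and follows essentially the same route as the paper: apply Lemma~\ref{tGk} with $k=3$, note that every $3$-edge isomorphism class other than $K_3$ (namely $3\times P_1$, $P_2\sqcup P_1$, $P_3$, $S_3$) already has density $p^3$ by the earlier lemmas and \eqref{prod}, and use the hypothesis that $G$ contains a triangle to get $\P(G_3=K_3)>0$ and hence $t(K_3,W)=p^3$. Your closing remark about the general template is exactly the pattern the paper reuses for $C_4$.
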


\begin{proof}
We apply Lemma~\ref{tGk} with $k = 3$. $G_3$ may contain only the following graphs: $3 \times P_1$, $P_2 \sqcup P_1$, $P_3$, $S_3$ and $K_3$. $G$ contains a $K_3$, therefore, $\P(G_3 = K_3) > 0$. We already know that $t(3 \times P_1, W) = t(P_2 \sqcup P_1, W) = t(P_3, W) = t(S_3, W) = p^3$, therefore, \eqref{tGkeq} implies that $t(K_3, W) = p^3$, as well.
\end{proof}

\begin{Lemma}
$t(C_4, W) = p^4$.
\end{Lemma}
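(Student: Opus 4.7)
The plan is to apply Lemma~\ref{tGk} with $k = 4$, following the strategy used to deduce $t(K_3, W) = p^3$. First I would enumerate the isomorphism types of 4-edge graphs that can arise as $G_4$: all forests with four edges, the cycle $C_4$, and, only if $G$ happens to contain a triangle, two further candidates, namely $K_3 \sqcup P_1$ and the \emph{paw} graph (a triangle with one pendant edge).

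For every such $F$ other than $C_4$, I would verify $t(F, W) = p^4$ as follows. If $F$ is a forest, build each connected component one pendant edge at a time starting from $P_1$, applying Lemma~\ref{extend} at every step, and then take the product over components via \eqref{prod}. If instead $K_3 \sqcup P_1$ or the paw appears with positive probability in $G_4$, then $G$ itself contains a $K_3$, so the previous lemma supplies $t(K_3, W) = p^3$; combining this with \eqref{prod} gives $t(K_3 \sqcup P_1, W) = p^4$, and combining it with Lemma~\ref{extend} (attaching the pendant to $K_3$) gives $t(\text{paw}, W) = p^4$.

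Since $G$ contains a $C_4$, we have $\P(G_4 = C_4) > 0$. Expanding the identity $\E\big(t(G_4, W)\big) = p^4$ from Lemma~\ref{tGk} as a convex combination over isomorphism classes and using that every summand except the $C_4$ one already equals $p^4$, the same cancellation argument as in the proof of $t(S_2, W) = p^2$ forces $t(C_4, W) = p^4$. The main obstacle is merely making sure the enumeration of 4-edge subgraph types is complete; the rest is a direct imitation of the triangle proof, with the minor extra step that both $K_3$-containing candidates are themselves reduced to the triangle lemma.
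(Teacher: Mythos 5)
Your proposal is correct and follows essentially the same route as the paper: apply Lemma~\ref{tGk} with $k=4$, check that every $4$-edge subgraph type other than $C_4$ has density $p^4$ (forests via Lemma~\ref{extend} and \eqref{prod}, the paw and $K_3 \sqcup P_1$ via the triangle lemma, which applies exactly when those types occur), and conclude by the convex-combination cancellation. The paper states this argument more tersely; your enumeration and case analysis just fill in the details it leaves implicit.
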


\begin{proof}
We apply Lemma~\ref{tGk} with $k = 4$. Using the previous lemmas and applying Lemma~\ref{extend}, we see that for all subgraphs with 4 edges except $C_4$, the homomorphism densities are $p^4$. This implies $t(C_4, W) = p^4$.
\end{proof}

The theorem of Chung, Graham and Wilson \cite{chung1989quasi}, in the language of graphons \cite{lovasz2012large} shows that if $t(C_4, W) = t(P_1, W)^4 = p^4$, then $W \equiv p$. \qed

\section{Acknowledgement}

I want to say thank you to Oleg Pikhurko for suggesting me this question and for useful comments on the paper.

\bibliography{reference}

\end{document}